\newtheorem{Thm}{Theorem}
\newtheorem{Conj}[Thm]{Conjecture}
\theoremstyle{definition}
\theoremstyle{definition}
\newtheorem{Rem}[Thm]{Remark}
\newtheorem{Ex}[Thm]{Example}
\theoremstyle{definition}
\title{Band Number and the Double Slice Genus}
\author{Clayton McDonald}
\address{Department of Mathematics, Boston College,140 Commonwealth Avenue, Chestnut Hill, MA 02467
}
\email[Clayton McDonald]{mcdonafi@bc.edu}
\begin{document}

\begin{abstract}
We study the double slice genus of a knot, a natural generalization of slice genus. We define a notion called band number, a natural generalization of band unknotting number, and prove it is an upper bound on double slice genus. Our bound is based on an analysis of broken surface diagrams and embedding properties of 3-manifolds in 4-manifolds.  
\end{abstract}

\maketitle


The study of surfaces in $B^4$ that a knot bounds is a well developed theme in low dimensional topology. Rather than studying properly embedded surfaces in $B^4$ which a given knot $K \subset S^3 = \partial B^4$ bounds, one can instead imagine two properly embedded surfaces $(S_1,K) \hookrightarrow (B_1^4,S^3)$ and $(S_2,K) \hookrightarrow (B_2^4,S^3)$, and then glue these two 4-balls together to produce an embedded surface $F \subset S^4$ whose intersection with the meridional $S^3$ is $K$. 

The \textbf{double slice genus} of $K$, $g_{ds}(K)$, is defined as the minimal genus of such an $F$ such that $F$ is unknotted, i.e. bounds a handlebody. This notion of unknottedness is a natural extension of the 1-knot case, where one definition of the unknot is a knot that bounds an embedded disc. We note that a handlebody is in some sense the simplest 3-manifold bounded by a surface, and that any two surfaces that bound handlebodies of the same genus are isotopic: the existence of a bounding handlebody, which retracts to a wedge of circles in $S^4$, guides the isotopy between the surfaces.


Double slice genus naturally lends itself to the study of embedded 3-manifolds in 4-manifolds through branched coverings (compare \cite{agl,donald15}). More specifically, if a knot $K$ on a meridional $S^3 \subset S^4$ lies on a particular knotted surface $F \subset S^4$, then by taking the $n$-fold cyclic cover of $S^4$ branched along $F$, we get a 4-manifold $\Sigma_n(S^4,F)$ with a map to $S^4$ that is 1-to-1 on $F$, and $n$-to-1 otherwise. The preimage of the meridional $S^3$ is a 3-manifold with a map to $S^3$ that is 1-to-1 along $F \cap S^3 = K$ and $n$-to-1 otherwise. Therefore, this manifold is $\Sigma_n(S^3,K)$, so we have an embedding of $\Sigma_n(S^3,K)$ into $\Sigma_n(S^4,F)$.

It follows that $g_{ds}(K)$ is a natural upper bound on $\varepsilon(\Sigma_2(S^3,K))$, the minimal $n$ for which the branched double cover of $K$ embeds in $\#_n S^2 \times S^2$ \cite[Definition 2.2]{agl}. This is because the branched double cover of $S^4$ with branch set an unknotted genus $g$ surface is $\#_g S^2 \times S^2$, so the branched double cover of $K$ embeds in $\#_g S^2 \times S^2$ for $g = g_{ds}$. There are other bounds on $\varepsilon(K)$ coming from more classical knot invariants, the Seifert genus $g_3(K)$ and unknotting number $u(K)$. Both of these bounds can be seen using the Montesinos trick on a presentation of the knot, obtaining an even integral surgery description of the branched double cover. The doubles of the traces of these integral surgery descriptions yield connect sums of $S^2 \times S^2$, giving $2g_3$ and $2u$ as upper bounds for $\varepsilon$.

The quantity $2g_3(K)$ can also easily be shown to be an upper bound for $g_{ds}(K)$, as the double of a Seifert surface for a knot is unknotted. This is because pushing the two copies of the Seifert surface into the two 4-balls sweeps out a handlebody which the resulting surface bounds. The quantity $2u(K)$ is also an upper bound for $g_{ds}(K)$, but this fact is less obvious and seen more easily from the results of this paper, which relate these invariants to band unknotting number.


An (oriented) \textbf{band unknotting sequence} for $K$ is a sequence of oriented saddle moves on $K$ that yields an unknot at the end (Figure \ref{fig:saddle}). If we follow this process in reverse from the unknot to $K$, each oriented saddle move corresponds to an immersed band attachment to the disc that the unknot bounds. Therefore, an oriented band unknotting sequence of length $2N$ yields a ribbon immersed surface $S_0$ with one disc and $2N$ bands, which we may arrange such that the bands are pairwise disjoint and intersect the disc only in ribbon singularities. Furthermore, we can promote this ribbon immersed surface to a ribbon surface $S$, a properly embedded surface in $B^4$ with only 0 and 1-handles. We do this by pushing the interior of the disc into $B^4$, pushing the disc portions of the ribbon singularities further into $B^4$ to remove the intersections. We define $u_b(K)$, the (oriented) \textbf{band unknotting number}, as the minimum length of a band unknotting sequence for $K$. 
\begin{figure}
    \centering
    \includegraphics[width=0.5\textwidth]{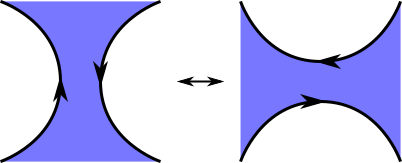}
    \caption{An oriented saddle move.}
    \label{fig:saddle}
\end{figure}

This band unknotting number can be seen as a unification of unknotting number and Seifert genus in this context, as it is a lower bound for both. Any crossing change can be obtained from two oriented saddle moves, and the existence of a Seifert surface gives a band unknotting sequence from the handle decomposition of the surface with length twice its genus. Furthermore, it is an upper bound on double slice genus and even the superslice genus  $g_{ss}$, defined below:
\begin{Thm}
\label{thm: ub}
For a knot K in $S^3$,
\begin{equation}
  u_b(K) (\geq g_{ss}(K)) \geq g_{ds}(K).
\end{equation}
\end{Thm}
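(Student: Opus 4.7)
The plan is to prove the two inequalities separately. The second one, $g_{ss}(K) \geq g_{ds}(K)$, is essentially a definitional observation: a superslice surface of $K$ gives by doubling an unknotted closed surface in $S^4$ meeting the meridional $S^3$ in $K$, which is in particular a valid double slice surface, so $g_{ds}(K) \leq g_{ss}(K)$. The bulk of the work is in proving $u_b(K) \geq g_{ss}(K)$.

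I would start with an oriented band unknotting sequence for $K$ of length $2N = u_b(K)$ and apply the construction already described in the preamble to obtain a ribbon surface $S \subset B^4$ bounded by $K$, built from one disc and $2N$ bands (so of genus $N$). The double $D(S) = S \cup_K \bar S \subset S^4$ is then a closed symmetric surface of total genus $2N$ whose cross-section with the meridional $S^3$ is $K$. It suffices to show that $D(S)$ is unknotted, i.e.\ bounds a 3-dimensional handlebody in $S^4$; this immediately yields $g_{ss}(K) \leq 2N = u_b(K)$.

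To construct such a handlebody $V \subset S^4$, I would thicken the building blocks of $S$ in three dimensions: the disc thickens to a 3-ball and each of the $2N$ bands thickens to a 3-dimensional 1-handle, producing an abstract genus-$2N$ handlebody. The only obstruction to embedding this directly in $S^3$ is the ribbon singularities between bands and the disc; these are resolved in $S^4$ by pushing the 3-ball thickening across the meridional $S^3$ into $B^4_+$ and its mirror into $B^4_-$, using the same pushes that promote the immersed ribbon surface $S_0$ to the embedded $S$. The resulting $V$ is then an embedded genus-$2N$ handlebody in $S^4$, and its boundary is exactly $D(S)$, verified via the broken surface diagram of $D(S)$. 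Equivalently, at each time $t$ of the ribbon movie one can track a 2-dimensional filling surface $F_t \subset S^3 \times \{t\}$ of the current knot, starting from a disc at the top of $B^4_+$ and extending downward through each saddle by a band attachment; the swept region in $B^4_+$ together with its mirror in $B^4_-$, glued along the middle surface $F_0$, reassembles $V$.

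The main obstacle is verifying that the resolved 3-manifold is genuinely embedded, is a handlebody, and has boundary exactly $D(S)$. Each ribbon singularity of $S_0$ contributes a local configuration in the broken surface diagram of $D(S)$ that must be checked to resolve consistently across both 4-balls, and each band attachment must be matched to a 3-dimensional 1-handle of $V$ without introducing spurious topology; this diagrammatic bookkeeping, alluded to in the abstract, is the technical crux of the argument.
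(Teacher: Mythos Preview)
Your reduction to showing that the double of a one-disc ribbon surface is unknotted matches the paper, as does the definitional observation $g_{ss}\ge g_{ds}$.  The gap is in your construction of the bounding handlebody.  If you thicken the disc to $D\times[-\epsilon,\epsilon]$ in the $w$-direction and each band to a $3$-dimensional $1$-handle $B\times[-\delta,\delta]$, these pieces are \emph{not} disjoint in $S^3\times[-1,1]$: at every ribbon singularity they meet in the $2$-disc $(\text{ribbon arc})\times[-\delta,\delta]$, and since $3+3-4=2$ this is a generic intersection of $3$-manifolds in a $4$-manifold and cannot be perturbed away.  Your movie version runs into the same wall: the filling surfaces $F_t$ obtained by successively attaching the saddle bands to the spanning disc of the unknot are precisely the partial ribbon immersed surfaces, which cease to be embedded the moment a band passes through the disc, so there is no embedded sweep-out.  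The ``diagrammatic bookkeeping'' you defer is therefore not a routine verification --- it is the whole problem, and nothing in the proposal indicates how to resolve these forced intersections while keeping $\partial V=D(S)$ with the correct over/under data at the two intersection circles coming from each ribbon singularity.

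The paper sidesteps this by a genuinely different argument: induction on the number of ribbon singularities of $S_0$.  The base case (no singularities) is exactly the Seifert-surface situation, where your sweep $S_0\times[-1,1]$ \emph{is} an embedded handlebody.  For the inductive step the key move is a \emph{band crossing change} between two bands of $S_0$; this alters the boundary knot but leaves the isotopy type of the doubled surface unchanged, because the two corresponding tubes can be slid past one another in the $w$-coordinate.  Using such moves one clears all band intersections from a Whitney disc for the outermost ribbon singularity along some band, then cancels that singularity by a Whitney move.  This reduces any one-disc ribbon surface, through moves preserving the double, to a Seifert surface, and the claim follows.
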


To prove Theorem \ref{thm: ub}, we use the double of the ribbon surface coming from the band unknotting sequence. The surface has a handle decomposition with one 0-handle and $u_b(K)$ 1-handles, so its double is a surface of genus $u_b(K)$ in $S^4$. Thus we will proceed by proving that this doubled surface is unknotted. We therefore establish that $u_b(K)$ is an upper bound on superslice genus $g_{ss}(K)$, which is defined as the minimal genus of an unknotted surface in $S^4$ that arises as the double of a slice surface of $K$. It is clear that $g_{ss}(K)$ an upper bound for $g_{ds}(K)$, as it is a more restrictive condition on the construction of such an unknotted surface.

Our proof will involve manipulations of broken surface diagrams for 2-knots \cite{cs}. A broken surface diagram for a 2-knot is exactly analogous to a knot diagram for a 1-knot. For a knot diagram, we can remove two balls from $S^3$ to get $S^2 \times [-1,1]$, and then project to the $S^2$ coordinate to depict our knot as an immersed circle in $S^2$, along with extra crossing information, which indicates which portion of the knot is higher in the $[-1,1]$ coordinate at self intersections. Similarly, for an embedding of a surface into $S^4$, we remove two 4-balls, project the surface from $S^3 \times [-1,1]$ to $S^3$, and obtain an immersed surface in $S^3$, along with some ``crossing information" for the self intersections, i.e. which part of the surface is higher in the $[-1,1]$ coordinate.

The resulting \textbf{broken surface diagram} captures the isotopy type of the embedded surface in $S^4$. As with knot diagrams, an arbitrary projection can be very badly behaved, but after a slight perturbation, we can guarantee an immersion with regularity properties for the singular set, such as transverse self intersections. 

We now describe a procedure to get from a ribbon surface $S$ for a knot to a broken surface diagram of its double in $S^4$. As stated before, we can remove a small neighborhood from each $B^4$ on each side of our meridional $S^3$, and think of the resulting $S^3 \times [-1,1]$  as our ambient space, as this admits a natural projection $\pi: S^3 \times [-1,1] \mapsto S^3$. We will call the $[-1,1]$ factor the $w$ coordinate, and the other factor the $S^3$ coordinate. We start with two copies of $S_0$, the ribbon immersed surface corresponding to $S$, in the meridional $S^3 \times {0}$. Take one copy of $S_0$ and push its interior into the bottom $B^4$ (i.e. down in $w$). Then push the other copy into the top $B^4$ (up in $w$) to form two embedded ribbon surfaces in their respective punctured 4-balls.
We will call these $S_-$ and $S_+$ respectively; their union $F = S_+ \cup S_-$ is the double of $S$.
Note that $S_+$ and $S_-$ both project to $S_0$ under $\pi$.
Although $S_0$ is immersed, it is still oriented, and thus two-sided. Therefore, this gives us two distinct directions in the $S^3$ coordinate in which we can push the interior of $S_0$.
By pushing $S_-$ in one of those directions and $S_+$ in the other direction, we obtain a regular immersion of $F$ with transverse self intersections by projecting it to the meridional $S^3$, which we now show how to diagrammatize.

\begin{figure}
    \centering
    \includegraphics[width=0.6\textwidth]{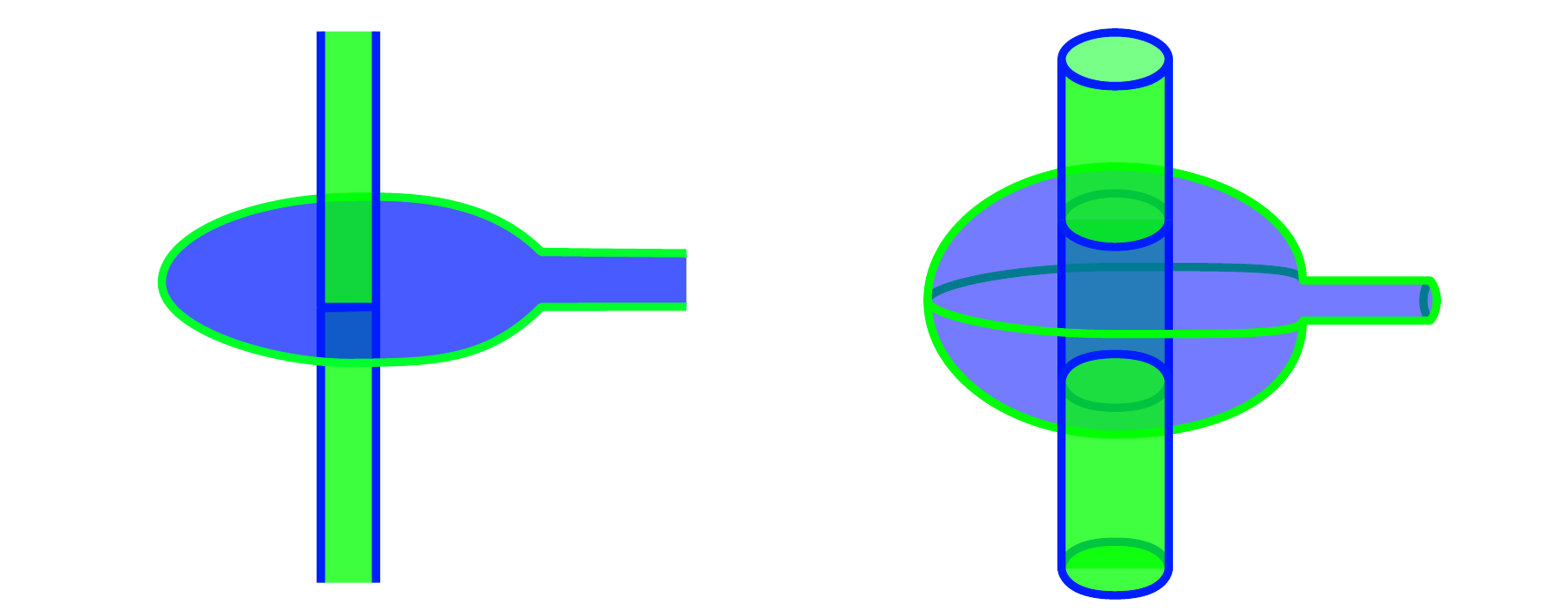}
    \caption{A ribbon immersed surface $S_0$ (left) and its double (right).}
    \label{fig:double}
\end{figure}

When we perform this pushoff (seen in Figure \ref{fig:double}), each disc in our disc-band presentation of $S$ gives rise to a sphere, and each band gives rise to a tube connecting two patches of these spheres. Thus we obtain a sphere-tube presentation of the double of $S$. Each ribbon singularity with a disc will form a circle as we inflate our bands into tubes, so when we push the two copies of the disc off each other, we will have two such intersection circles for each ribbon singularity of $S$. These pushoffs come with crossing information inherited from the projections of $S_+$ and $S_-$ to $S^3$.
Note that we have a natural hierarchy on the $w$ coordinate because of he way we pushed the two copies of $S_0$ to form the resulting embedded surface in $S^4$. The discs of $S_+$ are above its bands, which are in turn above all of $S_-$. In $S_-$, the opposite is true, as we pushed everything in the negative direction. This means that the tube part of the surface has a higher $w$ coordinate when it passes through $\pi(S_-)$, and a lower $w$ coordinate when it passes through $\pi(S_+)$.

Next we can discard our choices for a $w$ function on the surface, noting that any choice that preserves the crossing information of the broken surface will preserve the isotopy type (much like for a knot diagram). 
\begin{proof}[Proof of Theorem \ref{thm: ub}]
Let $S$ be a one-disk ribbon surface.
We claim that the double of $S$ is unknotted.
Note that if $S$ is obtained from a band unknotting sequence, then the genus of the double equals the number of bands used, so the theorem is reduced to proving the claim.
We proceed to establish the claim by induction on the number of ribbon singularities in $S_0$, the immersion of $S$.
In the base case where $S_0$ has no ribbon singularities, $S_0$ is in fact a Seifert surface, and the unknottedness of its double is clear.
Now suppose that $S_0$ has at least one ribbon singularity and the result is true for any one-disk ribbon surface whose immersion has fewer ribbon singularities.
The aim of the proof will be to use a few specific moves on the double of $S$ to pass to a ribbon surface whose immersion has one fewer ribbon singularity and whose double is isotopic to that of $S$.

Consider a band $B$ of $S_0$ containing a ribbon singularity and consider the first ribbon singularity of this band starting from one foot of the band.
We explain how to isotope $S$ to remove this ribbon singularity from $S_0$ without introducing any new ones. 

There are two arcs in $S_0$ from the foot of the band to the ribbon singularity, one along the core of $B$ and another along the disc.
We identify a Whitney disk $D$ in $S^3$ whose boundary is composed of the union of these two arcs which will guide the isotopy cancelling the ribbon singularity. We may assume that the interior of $D$ is disjoint from the disc of $S_0$, and intersects bands other than $B$ transversely in ribbon intersections. We can then perform a sequence of band crossing changes between $B$ and any bands intersecting $D$ as in \cite[Section 4.2]{ml} and Figure \ref{bcs} to remove all of the ribbon intersections with $D$.
Note that this move does not introduce any new ribbon singularities, but can change the isotopy type of the boundary knot. However, it does not change the isotopy type of the doubled surface. This is because one can push one of the two tubes higher than the other in the $w$ coordinate, so that when they pass through each other in the $S^3$ coordinate, they do not intersect in $S^3 \times [-1,1]$.

\begin{figure}[h]

    \labellist
    \small\hair 2pt
    \pinlabel $D$ at 250 450
    \endlabellist
    \centering
    \includegraphics[width=0.4\textwidth]{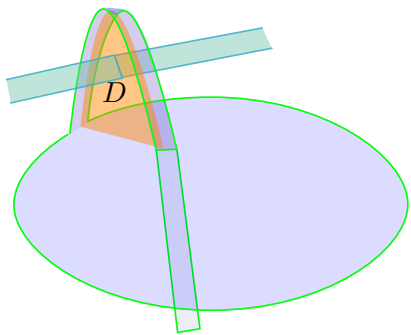}
    \caption{The cancelling Whitney disc $D$.}
    \label{fig:whitney}
\end{figure}
\begin{figure}[h]

    \centering
    \includegraphics[width = 0.5\textwidth]{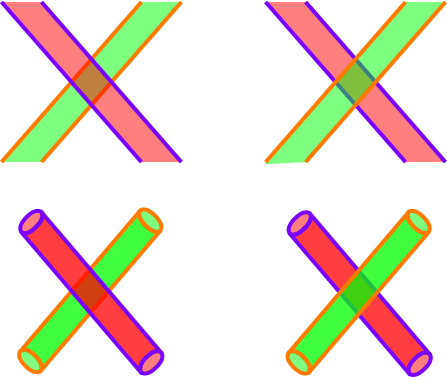}
    \caption{The isotopy of the doubled surface (below) inducing the band crossing change (above).}
    \label{bcs}
\end{figure}

Then, as there are no ribbon singularities with $D$, we can perform a Whitney move along $D$ cancelling the ribbon singularity. We now have a ribbon surface whose double is isotopic to that of $S$, but has one fewer ribbon singularity. Therefore by induction, its double, and thus the double of $S$, is unknotted.
\end{proof}
Using similar methods as in Theorem \ref{thm: ub}, we can prove a slightly stronger bound on $g_{ds}$ using tubings, i.e. 2-D 1-handles, in the broken surface diagram.
\begin{Thm}
\label{thm: b}
Given a ribbon surface $S$ for a knot $K$ in $S^3$ with $d$ discs and $b$ bands, $b$ is an upper bound for $g_{ds}(K)$.
\end{Thm}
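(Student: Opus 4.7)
The plan is to mimic the inductive argument of Theorem \ref{thm: ub}, using $d-1$ extra tubings in the broken surface diagram of the double of $S$ to handle the multi-disc case. I would start by forming the double $F = DS \subset S^4$, whose broken surface diagram consists of $d$ spheres (one per disc of $S$) joined by $b$ tubes (one per band), so that $F$ has genus $b-d+1$. I would then add $d-1$ tubings: for each $i = 2, \ldots, d$ a tube joining the sphere from $D_1$ to the sphere from $D_i$, routed along a simple arc in $S^3$ chosen to avoid $S_0$ and pushed slightly off the equatorial $S^3$ into the upper $B^4$. The resulting closed surface $F' \subset S^4$ has genus $b$ and satisfies $F' \cap S^3 = K$, so it suffices to show that $F'$ is unknotted.

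I would establish unknottedness of $F'$ by induction on the number of ribbon singularities of $S_0$. In the base case, $S_0$ has no ribbon singularities, so $S$ embeds in $S^3$ as a Seifert surface for $K$, and $F$ bounds the handlebody $S_0 \times [-\varepsilon, \varepsilon]$ of genus $b-d+1$ in $S^4$. The arcs used for the added tubings can be chosen in the complement of this handlebody, and attaching $d-1$ three-dimensional $1$-handles to it along those arcs extends it to a handlebody of genus $b$ in $S^4$ bounded by $F'$, so $F'$ is unknotted.

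For the inductive step, I would pick a band $B$ of $S_0$ containing a ribbon singularity $p$ at which $B$ crosses a disc $D_i$, and let $D_j$ be the disc at one foot of $B$. When $j = i$, the Whitney disc cancellation of Theorem \ref{thm: ub} applies verbatim: a Whitney disc bounded by arcs on $B$ and $D_i$ cancels $p$ after band crossing changes (Figure \ref{bcs}) remove any ribbon intersections of $D$ with other bands, and all these moves preserve the isotopy type of $F'$. When $j \neq i$, no arc on a single disc of $S_0$ runs from the foot of $B$ to $p$, so I would invoke the added tubings: their projections to $S^3$ serve as \emph{virtual bands} linking $D_1$ to the other discs, and the union of $D_j$, $D_1$, $D_i$ and the connecting virtual bands embeds in $S^3$ as a topological super-disc (the virtual bands having been chosen to avoid $S_0$). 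The disc arc of the Whitney disc is then routed along this super-disc, and the remainder of the Theorem \ref{thm: ub} argument proceeds as before.

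The main obstacle I expect lies in this second subcase of the inductive step: verifying that the Whitney move and the intermediate band crossing changes, now involving arcs that traverse the added tubings, still lift to isotopies of $F' \subset S^4$ and not merely of its broken surface diagram in $S^3$, and in particular that the tubings can be pushed out of the way before and restored after each move. A careful general-position argument in the broken surface diagram, together with the fact that the added tubings live in the upper $B^4$ and thus admit ample room for repositioning, should resolve these technicalities.
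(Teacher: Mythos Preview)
Your proposal takes a harder road than the paper does. After you add the $d-1$ trivial tubes to the double $F$, the paper does not rerun the inductive Whitney-disc argument at all: it simply observes that those tubes are isotopic to the doubles of $d-1$ trivial bands joining the discs of $S_0$, so the tubed surface $F'$ is (isotopic to) the double of a \emph{one-disc} ribbon surface $S'$ with $b$ bands. The claim established in the proof of Theorem~\ref{thm: ub} --- that the double of any one-disc ribbon surface is unknotted --- then applies verbatim, and the proof is over in a line.

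By contrast, you try to redo the induction with the original multi-disc $S_0$ and treat the added tubes as ``virtual bands'' only when you need them in the $j\ne i$ subcase. This forces you to route Whitney arcs through projections of tubes that live off the equatorial $S^3$, and to check that all the band-crossing and Whitney moves still lift to isotopies of $F'$ while shuffling the extra tubes around --- precisely the obstacle you flag at the end. None of that is necessary: if you simply regard the $d-1$ tubes as honest doubled bands from the outset, the $j\ne i$ case evaporates (all discs are now one disc), and Theorem~\ref{thm: ub} already does the work. Your argument is not wrong in spirit, but it reinvents the induction in a setting where a one-line reduction suffices.
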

This means that the minimal number of bands among all ribbon surfaces with boundary $K$, which we will denote as the \textbf{band number} $b(K)$, is an upper bound on double slice genus. The quantity $b(K)$ is a generalization of $u_b(K)$, where it can be seen as the distance via oriented saddle moves between $K$ and some unlink, instead of the unknot in the case of $u_b(K)$. It is also a generalization of fusion number $f(K)$, or ribbon fusion number of a ribbon knot, discussed in \cite{kanenobu}, the minimum number of bands in a ribbon disc for $K$. Not only does $b(K)$ extend the definition to non-ribbon knots, it also is theoretically lower, as the minimum number of bands is not a priori realized by a minimum genus ribbon surface.

\begin{proof}
First, use the same doubling procedure as before to go from a ribbon surface with $d$ discs and $b$ bands to a broken surface diagram of the double with $d$ spheres and $b$ tubes between them, such that the only self intersections in the projection are double circles between the tubes and the spheres. This is a broken surface diagram of a potentially knotted surface in $S^4$ such that there is some meridional $S^3$ whose intersection with this surface is $K$.

Then tube together all of the spheres with trivial tubes (one can imagine the core of such a tube as an arc between the two spheres that does not intersect the immersed surface) to get a surface with one sphere and $b$ tubes. Note that if we assume that the attaching regions of the tubes we added are both on the bottom disc of the sphere, i.e. the one with  $w < 0$, then it becomes clear that we can do this tubing such that there is still a meridional $S^3$ that intersects the resulting surface in $K$, as in Figure \ref{fig:tube}. 

\begin{figure}[h]
    \labellist
    \small\hair 2pt
    \pinlabel $S^4$ at 150 530
    \pinlabel $F$ at 320 430
    \pinlabel tube at 650 340
    \pinlabel $S^3$ at 410 550
    \endlabellist
    \centering
    \includegraphics[width=0.5\textwidth]{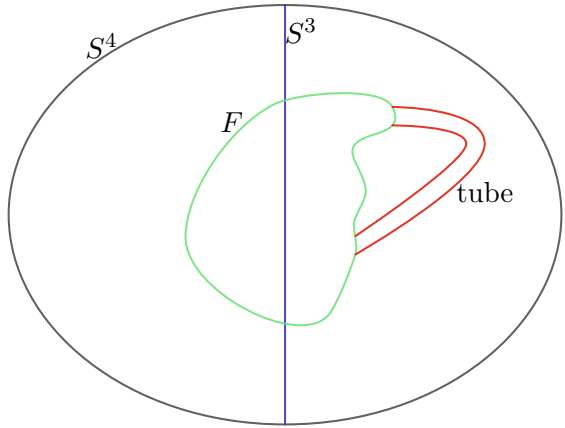}
    \caption{A schematic for tube attachment to $F$ as to avoid the meridional $S^3$.}
    \label{fig:tube}
\end{figure}

From here we note that because the resulting surface has a one sphere, $b$ tube presentation, Theorem \ref{thm: ub} shows that it is an unknotted genus $b$ surface, and thus the double slice genus of $K$ is at most $b$.
\end{proof}

Note that because the tubings we use are not symmetric, $b$ is not a bound on $g_{ss}$.
Unlike the other bounds for $g_{ds}$, $b$ can take odd values. In particular, we can use this to prove that the stevedore knot $6_1$ has $g_{ds} = 1$, as it has a two-disc, one-band ribbon disc, but it is not doubly slice because the first homology of its branched double cover is not a direct double, as in \cite[Proposition 2.1]{ml}.

\begin{Rem}We can improve on this bound in certain situations. Using the band crossing change argument from before, we see that the isotopy class of the 2-knot gotten by doubling a ribbon disc depends only on the homotopy classes of the cores of the bands in $X$, the complement of the boundaries of the discs in $S^3$.
(In fact, the isotopy class only depends on the bands' homotopy classes up to band swim and handle slide equivalence).
There is an action of $\pi_1(X)$ on these homotopy classes, where a group element acts by concatenating that loop onto our arc, using a fixed set of arcs from the basepoint of $X$ to the boundary components of $X$.

The fundamental group $\pi_1(X)$ is a free group generated by meridians of the disc boundaries, and the group action acts transitively on the set of homotopy classes beginning and ending on a given pair of boundary components of $X$, as concatenating a meridian to an existing path allows an arc to pass through the corresponding disc boundary. Therefore, we can encode, with some redundancy, the homotopy classes of these arcs by their start and endpoints as well as an element of $\pi_1(X)$.
Moreover, we note that the word we designate in $\pi_1(X)$ corresponds to the orderings of the ribbon singularities of the corresponding band, as a meridian of a disc boundary only intersects its disc in a single point. This means that the ordering of the ribbon singularities with sign encapsulates all of the information about the homotopy class of the band.

The core of the argument in Theorem \ref{thm: ub} was that we could cancel the closest ribbon singularity to the foot of the band if it is with the disc that foot is attached to.
Additionally, although we attached the tubes in Theorem \ref{thm: b} such that they didn't intersect the meridional $S^3$, they are isotopic to doubles of trivial bands between discs.
Adding such a band has the effect on $\pi_1(X)$ of identifying the meridians of the two discs that the band connects.
Moreover, the double of our ribbon surface after trivial banding between the discs is still a surface which contains $K$ as a cross section. Therefore, in our tubing process if we ever make a partial set of identifications of generators such that all of these homotopy classes of bands can be cancelled, then this would be a more refined bound on double slice genus.
\end{Rem}
\begin{Ex}

Consider a four-disc ($D_1$, $D_2$, $D_3$, $D_4$) three-band ($B_1$ from $D_1$ to $D_2$, $B_2$ from $D_2$ to $D_3$, $B_3$ from $D_3$ to $D_4$) ribbon disc with $B_1$ passing through $D_3$, $B_2$ passing through $D_1$, and $B_3$ passing through $D_4$ and then $D_2$.
Theorem \ref{thm: b} gives an upper bound of three on the double slice genus of the boundary knot.
However, we see that if we trivially band $D_1$ to $D_3$ and $D_2$ to $D_4$, $B_1, B_2$ and $B_3$ are homotopically trivial in the resulting complement of disc boundaries, so we get a bound of two.
\end{Ex}

We close with some conjectures that illustrate the limits of our knowledge:
As indicated above, the band number of a ribbon knot is a lower bound on its ribbon fusion number.
We conjecture that the two are not always equal, i.e that there exists a ribbon knot that bounds a ribbon surface with $b$ bands, yet any ribbon disc for it has more than $b$ bands. 

For example, the Whitehead double of any knot has Seifert genus one, and therefore has band number at most two. Moreover, the untwisted Whitehead pattern is a saddle move away from the 0-framed 2-cable pattern. Therefore we can construct a ribbon disc for the double of a ribbon knot by doing this saddle move and then appending two ribbon discs for the 2-cable. The natural band presentation of the ribbon disc for the untwisted Whitehead double has $2b+1$ bands if the original ribbon disc had $b$ bands, making these Whitehead doubles natural candidates for this conjecture. However, untwisted Whitehead doubles of ribbon knots are all superslice \cite{gordonsumners}, so they have band presentations with homotopically trivial bands. Therefore, many of the properties of these knots will be hard to detect algebraically.

More generally, we expect a similar statement is true for every genus: \begin{Conj}
For every genus $g$, there exists a knot $K$ with ribbon genus $g$ for which $b(K)$ is lower than the number of bands in any ribbon surface of genus $g$.
\end{Conj}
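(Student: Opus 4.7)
The plan is to realize the conjectured gap for each $g$ by generalizing the Whitehead double construction sketched just above, and then to supply the missing lower bound on bands via Heegaard Floer obstructions. For $g = 0$, I would take $K_0 = W(K)$ for $K$ a ribbon knot with large ribbon fusion number; the genus-$1$ Seifert surface provides a one-disk genus-$1$ ribbon surface with two bands, so $b(K_0) \le 2$, and the task reduces to showing that every ribbon disc of $K_0$ has at least three bands. For $g \ge 1$, the candidate $K_g$ would be the result of satelliting $K$ along a genus-$g$ ``Whitehead-type'' pattern chosen to be one saddle move away from a $2$-cable pattern: this ensures $K_g$ has a one-disc genus-$g$ ribbon surface coming from a genus-$g$ Seifert surface (so $b(K_g) \le 2g$), while any one-disc genus-$g$ ribbon surface for $K_g$ would have to trivialize the satellite pattern in a way we hope to obstruct.

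The argument then splits into two parts. On the upper-bound side, the constructions above already give $b(K_g) \le 2g$. On the lower-bound side, one must show that no genus-$g$ ribbon surface for $K_g$ has only $2g$ bands, equivalently that every genus-$g$ ribbon surface has at least two discs; since a genus-$g$ ribbon surface with $d$ discs has $d - 1 + 2g$ bands, $d \ge 2$ already forces at least $2g + 1$ bands, which would yield the strict inequality. To force $d \ge 2$, I would use lower bounds on the number of $1$-handles in a ribbon surface coming from Heegaard Floer homology: the torsion-order inequalities for ribbon fusion number (in the spirit of Juh\'asz--Miller--Zemke and Alishahi--Eftekhary) have natural refinements whose proofs go through for one-disc ribbon surfaces of any genus, and I would arrange the satellite pattern so that this refined bound exceeds $2g$ on $K_g$. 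Alongside, I would verify that $K_g$ has ribbon genus exactly $g$ by pairing the Seifert upper bound with a concordance-invariant lower bound such as $\tau$ or $s$.

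The main obstacle is precisely the lower-bound step: detecting the extra disc. As the paper itself notes in the remark following Theorem \ref{thm: b}, classical invariants attached to the fundamental group of the disc-boundary complement are insensitive to these Whitehead-type candidates, because they are essentially superslice and so the homotopy- and homology-level obstructions vanish. The genuine technical work is therefore to establish a sharp enough Floer-theoretic refinement of the torsion-order bound for fusion number that remains informative on positive-genus one-disc ribbon surfaces, and to verify it on an explicit family. If a uniform construction proves intractable, a fallback would be to prove the conjecture for small $g$ by direct computation of bordered Floer invariants and then build $K_{g+1}$ from $K_g$ inductively by attaching a single carefully chosen Whitehead-type handle that raises ribbon genus by one while controlling the Floer data.
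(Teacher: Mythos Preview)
The statement you are attempting is a \emph{conjecture}: the paper offers no proof, and in fact closes by saying that ``demonstrating this difference seems challenging, as the lower bounds we know how to prove for ribbon fusion number give lower bounds for band number as well.'' So there is no proof in the paper to compare against; what remains is to assess whether your outline could plausibly succeed.

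There is an internal contradiction in your plan for $g \ge 1$. You produce $K_g$ with a genus-$g$ Seifert surface, note that this is a one-disc genus-$g$ ribbon surface with $2g$ bands, and then announce that the goal is to show that \emph{every} genus-$g$ ribbon surface for $K_g$ has at least two discs (equivalently, more than $2g$ bands). The Seifert surface you just built is a counterexample to the very statement you propose to prove. The $g=0$ case avoids this because there the Seifert surface has genus one, not zero; but your generalization places the Seifert surface at the ribbon genus itself, which collapses the intended gap. Any viable construction must have the band-minimizing ribbon surface live at a genus \emph{strictly above} $g$, while all genus-$g$ ribbon surfaces are forced to have many discs.

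Even in the $g=0$ case, the substantive step---forcing at least three bands in every ribbon disc for a Whitehead double of a ribbon knot---is exactly the obstacle the paper flags. The torsion-order inequalities of Juh\'asz--Miller--Zemke and Alishahi--Eftekhary bound the number of $1$-handles in \emph{any} ribbon surface, not just ribbon discs; applied to your genus-one Seifert surface they already give torsion order at most $2$, so they cannot yield the $\ge 3$ you need for discs. You assert the existence of ``natural refinements whose proofs go through for one-disc ribbon surfaces of any genus'' and which see the extra disc, but no such refinement is stated, cited, or sketched; this is precisely the missing idea, and the paper's remark that untwisted Whitehead doubles of ribbon knots are superslice (hence invisible to the homotopy-level band data) suggests that any such refinement would require genuinely new input.
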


Demonstrating this difference seems challenging, as the lower bounds we know how to prove for ribbon fusion number give lower bounds for band number as well.

\section*{Acknowledgements.} Thanks to Antonio Alfieri, for piquing my interest in this subject, as well as to my advisor Joshua Greene and Maggie Miller for helpful conversations.

\bibliographystyle{plain}
\bibliography{main}

\end{document}